\numberwithin{equation}{section}
\newtheorem{theorem}{Theorem}[section]
\newtheorem{proposition}[theorem]{Proposition}
\theoremstyle{remark}
\newtheorem{remark}{Remark}[section]
\theoremstyle{definition}
\newtheorem{example}[theorem]{Example}
\newcommand{\HH}{{\mathcal{H}}}
\newcommand{\R}{{\mathbb{R}}}
\newcommand{\C}{{\mathbb{C}}}
\newcommand{\Z}{{\mathbb{Z}}}
\begin{document}

\title%
[Weak dispersion for the magnetic Dirac flow]%
{Virial identity and weak dispersion for the magnetic Dirac
equation}
\begin{abstract}
  We analyze the dispersive properties of a Dirac system
  perturbed with a magnetic field. We prove a
  general virial identity; as applications, we obtain smoothing
  and endpoint Strichartz estimates which are optimal from the decay
  point of view. We also prove a Hardy-type inequality for the
  perturbed Dirac operator.
\end{abstract}
\date{\today}    
\author{Nabile Boussaid}
\address{Nabile Boussaid: D\'epartement de Math\'ematiques,
Universit\'e de Franche-Comt\'e,
16 route de Gray,
25030 Besan\c{c}on Cedex,
France}
\email{nabile.bousssaid@univ-fcomte.fr}

\author{Piero D'Ancona}
\address{Piero D'Ancona:
SAPIENZA - Universit\`a di Roma, Dipartimento di Matematica,
Piaz\-zale A.~Moro 2, I-00185 Roma, Italy}
\email{dancona@mat.uniroma1.it}

\author{Luca Fanelli}
\address{Luca Fanelli:
Universidad del Pais Vasco, Departamento de
Matem$\acute{\text{a}}$ticas, Apartado 644, 48080, Bilbao, Spain}
\email{luca.fanelli@ehu.es}

\subjclass[2000]{
35L05, 
35Q40, 
58J50, 
}
\keywords{%
Dirac equation,
smoothing estimates,
Strichartz estimates
wave equation,
dispersive equations,
magnetic potential }

\maketitle


\section{Introduction}\label{sec:intro} 

The Dirac equation is one of the fundamental systems
of modern physics and mathematics, used to describe a spin 1/2
particle in quantum electrodynamics. Even if the physical
interpretation of Dirac fields is not completely unambiguous,
the very rich mathematical structure of this system
makes it an interesting
object of study. We refer to \cite{T} for a thorough treatment of
the subject, including the physical validity of the
model.

We fix our notations.
The Dirac equation is the $4\times4$ constant coefficient system
\begin{equation}\label{eq:freedir}
  iu_{t}=m \beta u+\mathcal{D} u,\qquad m\in \mathbb{R}
\end{equation}
where $u:\mathbb{R}_{t}\times
\mathbb{R}^{3}_{x}\to \mathbb{C}^{4}$, the operator $\mathcal{D}$
is defined as
\begin{equation*}
  \mathcal{D}=i^{-1}\sum_{k=1}^{3}\alpha_{k}\partial_{k}
\end{equation*}
and the $4\times4$ \emph{Dirac matrices} can be written
\begin{equation*}
  \alpha_{k}=
  \begin{pmatrix}
  0 &\sigma_{k}  \\
  \sigma_{k} & 0
  \end{pmatrix},\quad
  \beta=
  \begin{pmatrix}
  I_{2} & 0\\
   0& -I_{2}
  \end{pmatrix},\qquad
  k=1,2,3
\end{equation*}
in terms of the \emph{Pauli matrices}
\begin{equation*}
  I_{2}=
  \begin{pmatrix}
  1 &0 \\
  0 &1
  \end{pmatrix},\quad
  \sigma_{1}=
  \begin{pmatrix}
  0 &1 \\
  1 &0
  \end{pmatrix},\quad
  \sigma_{2}=
  \begin{pmatrix}
  0 &-i \\
  i &0
  \end{pmatrix},\quad
  \sigma_{3}=
  \begin{pmatrix}
  1 &0 \\
  0 &-1
  \end{pmatrix}.
\end{equation*}
The coefficient $m$ is called the \emph{mass} and we shall
distingush the \emph{massless} case $m=0$ from the
\emph{massive} case $m\neq0$. This
distinction is important in relation to
the dispersive properties of the equation.
Indeed, the commutation rules
\begin{equation*}
\alpha_{\ell}\alpha_{k}+\alpha_{k}\alpha_{\ell}
=2\delta_{kl}I_{4}
\end{equation*}
imply the identity
\begin{equation*}
     \mathcal{D}^{2}=-\Delta I_{4}.
\end{equation*}
Thus we have the property
\begin{equation*}
  (i\partial_{t}-\mathcal{D})(i\partial_{t}+\mathcal{D})=
  (\Delta-\partial^{2}_{tt}) I_{4}
\end{equation*}
showing the intimate relation between the Dirac and the wave equation.
A similar computation in the massive case produces a Klein-Gordon
equation with positive mass $m^{2}$:
\begin{equation*}
  (i\partial_{t}-\mathcal{D}-m \beta)(i\partial_{t}+\mathcal{D}+m \beta)=
  (\Delta-m^{2}-\partial^{2}_{tt}) I_{4}
\end{equation*}
It is then straightforward to derive dispersive and smoothing properties
of the free flows from the corresponding ones for the scalar equations
using these identities (see e.g. \cite{pda-lf2}).
Our purpose here is to extend these
properties to the case of system perturbed by a
magnetostatic potential
\begin{equation*}
  A=A(x)=(A^1(x),A^2(x),A^3(x)):\mathbb{R}^{3}\to \mathbb{R}^{3}.
\end{equation*}
In virtue of the principle of minimal electromagnetic coupling, the influence of the field is introduced in the equation by replacing
the standard derivatives with the \emph{covariant} derivatives
\begin{equation*}
  \nabla_A:=\nabla-iA,
\end{equation*}
so that the operator $\mathcal{D}$ is replaced by
\begin{equation}\label{eq:diracopmassless}
  \mathcal{D}_{A}=i^{-1}\sum_{j=1}^{3}\alpha_{k}(\partial_{k}-i A^{k})
\end{equation}
We shall also use the unified notation
\begin{equation}\label{eq:diracop}
  \HH=i^{-1}\alpha\cdot\nabla_{A}+m\beta=
    \mathcal{D}_{A}+m \beta
\end{equation}
for the perturbed operator, which covers both the massive and
the massless case. Thus our main goal here is to
investigate the dispersive properties of the flow
$u=e^{it\HH}f$ relative to the Cauchy problem
\begin{equation}\label{eq:dirac}
    iu_t(t,x)+\HH u(t,x)=0, \qquad
    u(0,x)=f(x)
\end{equation}
where $u(t,x):\R\times\R^3\to\C^4$, $f(x):\R^3\to\C^4$, and $m\in\R$.

It is natural to require that the operator $\HH$ be selfadjoint.
Several sufficient conditions on the potential $A$ are
known; e.g., if the field $A$ is smooth or satisfies
\begin{equation*}
  |A(x)|\le \frac{a}{|x|}+b,\qquad
  a<1,\quad b>0
\end{equation*}
then $\HH$ admits a unique selfadjoint extension. We refer to
\cite{T} for a discussion of this problem; here we prefer to
make an all-encompassing abstract assumption on the operator:

\begin{quote}
  \textsc{Self-adjointness assumption (A)}: the operator $\HH$ is
  essentially selfadjoint on $C^{\infty}_{c}(\mathbb{R}^{n})$,
  and in addition, for initial data in $C^{\infty}_{c}(\mathbb{R}^{n})$,
  the flow $e^{it\HH}f$ belongs to $C(\mathbb{R},H^{3/2})$.
\end{quote}
The density condition allows to approximate
rough solutions with smoother ones, locally uniformly in time,
and is easily verified in concrete cases. 

Dispersive, smoothing and Strichartz estimates for a
perturbed Dirac equation of the form
\begin{equation*}
  iu_{t}=\HH_{0}u+V(x)u
\end{equation*}
were obtained earlier in \cite{pda-lf1, pda-lf2, B, B2}, for a general potential
$V=V^{*}\in \mathbb{C}^{4 \times4}$ satisfying suitable smallness
and decay conditions. In those works we used a perturbative
approach, relying heavily on spectral methods.
Here we follow a different
approach, based on multiplier methods, with two major
advantages. First, we can partially overcome the smallness
assumption; and second, the assumptions are more natural from
the physical point of view since they are expressed in terms
of the \emph{magnetic field}
\begin{equation*}
  B=\mathop\mathrm{curl}A,
\end{equation*}
which is the physically relevant quantity.
Actually, all assumptions are in terms of the quantities
\begin{equation*}
  B_{\tau}=\frac{x}{|x|}\wedge B,
  \qquad
  \partial_rB=(\partial_rB^1,\partial_rB^2,\partial_rB^3),
\end{equation*}
which are, respectively, the \emph{tangential component} and
the \emph{radial derivative} of the field $B$.
Moreover, we establish for the first time a \emph{virial identity}
for the perturbed Dirac equation, which has several application
not restricted to smoothing properties of the solution.
Multiplier methods in relation with weak dispersion
properties have a long story, starting from Morawetz
\cite{mor} for the Klein-Gordon equation and
\cite{cs}, \cite{s}, \cite{v} for the Schr\"odinger equation,
and adapted to  more general situations in \cite{pv}, \cite{pv2}.
Potential perturbations for the Schr\"odinger equation were
considered in \cite{brv}, \cite{brvv}, while the magnetic case
was studied in \cite{FV}. The perturbed Dirac equation was
studied in \cite{pda-lf1}, \cite{pda-lf2}, \cite{B} and \cite{B2} using spectral
instead of multiplier methods, which are applied for the first
time here.

The paper is organized as follows. In the rest of the Introduction
we shall describe the main results of the paper, namely a
general virial identity and optimal smoothing and Strichartz
estimates for the Dirac equation perturbed with a magnetostatic
potential. Sections 2, 3 and 4 are devoted to the proofs. In the
Appendix we prove a magnetic Hardy inequality (Proposition
\ref{thm:hardy}) which is elementary but has maybe an independent
interest.

\subsection{Virial identities}

The Dirac operator does not have a definite sign and this is a
substantial difficulty for a direct application of multiplier methods.
To overcome it we shall resort to the \emph{squared} Dirac equation
\begin{equation*}
  (i\partial_t-\HH)(i\partial_t+\HH)=(-\partial_{tt}-\HH^2).
\end{equation*}
Thus we are reduced
to study a diagonal system of wave (Klein-Gordon) equations of the form
\begin{equation}\label{eq:wave}
    u_{tt}(t,x)+Lu(t,x)=0,\qquad
    L=(m^{2}-\Delta)I
\end{equation}
with $u=u(t,x):\R\times\R^3\to\C^4$.
Our first result is a formal virial identity
for solutions of a general system of wave equations like
\eqref{eq:wave}, with $L$ being any selfadjoint operator on
$L^{2}(\mathbb{R}^{n};C^{k})$. In the following, round brackets
\begin{equation*}
  (F,G)=\int_{\mathbb{R}^{n}}F \cdot\overline{G}dx
\end{equation*}
denote the inner product in $L^2(\R^n;\C^k)$,
while $[S,T]=ST-TS$ is the commutator of operators.

\begin{theorem}[Virial identity for the wave equation]
                 \label{thm:virial1}
  Given a function $\phi:\mathbb{R}^{n}\to \mathbb{R}$, define
  the quantity
  \begin{equation}\label{eq:teta}
    \Theta(t)=\left(\phi u_t,u_t\right)+\Re\left(\left(2\phi
    L-L\phi\right)u,u\right).
  \end{equation}
  Then any solution $u(t,x)$ of
  \eqref{eq:wave} satisfies the formal identities
  \begin{align}
    & \dot\Theta(t)=\Re\left([L,\phi]u,u_t\right)
    \label{eq:tetadot}
    \\
    & \ddot\Theta(t)=-\frac12\left([L,[L,\phi]]u,u\right).
    \label{eq:tetadotdot}
  \end{align}
\end{theorem}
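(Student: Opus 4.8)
The plan is to verify both identities by straightforward differentiation in $t$, substituting the equation $u_{tt}=-Lu$ from \eqref{eq:wave} and rearranging commutators; since the statement is \emph{formal}, no attention to domains or integrability is needed at this stage. The only structural facts I would use are: the multiplication operator $\phi$ is self-adjoint on $L^2(\R^n;\C^k)$ because $\phi$ is real-valued; $L$ is self-adjoint by hypothesis; hence $\phi L+L\phi$ is self-adjoint, $[L,\phi]$ is \emph{anti}-self-adjoint (i.e.\ $[L,\phi]^*=-[L,\phi]$), and $[L,[L,\phi]]$ is self-adjoint. Consequently $\Re(Pv,w)=\Re(v,Pw)$ whenever $P$ is self-adjoint, while $(Pv,v)$ is purely imaginary whenever $P$ is anti-self-adjoint; these are the two elementary observations that do all the work.

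For \eqref{eq:tetadot} I would differentiate the two pieces of $\Theta$ in \eqref{eq:teta} separately. The kinetic term gives $\frac{d}{dt}(\phi u_t,u_t)=2\Re(\phi u_{tt},u_t)=-2\Re(\phi Lu,u_t)$, using that $\phi$ is a real self-adjoint multiplier. For the potential term, writing $M=2\phi L-L\phi$ one computes $M+M^*=\phi L+L\phi$, and differentiating $\Re(Mu,u)$ produces $\Re(Mu_t,u)+\Re(Mu,u_t)=\Re\big((M+M^*)u_t,u\big)=\Re\big((\phi L+L\phi)u,u_t\big)$, the final step using self-adjointness of $\phi L+L\phi$. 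Adding the two contributions, the $\phi L$ terms combine via $-2\phi L+\phi L+L\phi=L\phi-\phi L$, which is exactly $\dot\Theta=\Re([L,\phi]u,u_t)$.

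For \eqref{eq:tetadotdot} I would differentiate the identity just obtained: $\ddot\Theta=\Re([L,\phi]u_t,u_t)+\Re([L,\phi]u,u_{tt})$. The first term vanishes because $[L,\phi]$ is anti-self-adjoint, so $([L,\phi]u_t,u_t)$ is purely imaginary. In the second term I substitute $u_{tt}=-Lu$ and move $L$ across the inner product (self-adjointness of $L$): $\Re([L,\phi]u,u_{tt})=-\Re\big(L[L,\phi]u,u\big)$. Finally, setting $K=[L,\phi]$ one has $(LK)^*=K^*L^*=-KL$, so symmetrizing the real part gives $\Re(LKu,u)=\tfrac12\big((LK-KL)u,u\big)=\tfrac12\big([L,[L,\phi]]u,u\big)$, whence $\ddot\Theta=-\tfrac12\big([L,[L,\phi]]u,u\big)$; the right-hand side is automatically real since $[L,[L,\phi]]$ is self-adjoint. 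Because the whole computation is purely algebraic, there is no genuine analytic obstacle: the one point requiring care is performing the real-part and symmetrization manipulations consistently with the correct self-adjointness parity of $\phi$, $L$, $[L,\phi]$ and $[L,[L,\phi]]$. (Turning these formal identities into rigorous ones in the applications is a separate issue, to be handled later via Assumption (A) together with a density/approximation argument.)
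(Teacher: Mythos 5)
Your proof is correct and follows essentially the same route as the paper: compute $\dot\Theta$ by differentiating the two pieces of $\Theta$ and combining via self-adjointness of $\phi L+L\phi$, then differentiate once more and use anti-self-adjointness of $[L,\phi]$ and a symmetrization of the real part to produce $[L,[L,\phi]]$. The only difference is that you make the parity bookkeeping more explicit (introducing $M=2\phi L-L\phi$ and computing $M+M^*$), whereas the paper states the same identity directly; the content is identical.
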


We recall that the virial identity for a magnetic wave equation
first appeared in \cite{F,FV}; the
abstract formulation given here can be applied to more general
equations including magnetic Dirac systems.
Thus, as an application of \eqref{eq:tetadot},
\eqref{eq:tetadotdot} we obtain:

\begin{theorem}[Virial identity for the Dirac equation]
      \label{thm:virial2}
  Assume the operator $\HH$ defined by \eqref{eq:diracop}
  satisfies assumption (A) and
  let $\phi:\mathbb{R}^{3}\to\R$ be a real valued function.
  Then any solution $u(t,x)$ of \eqref{eq:dirac}
  satisfies the formal virial identity,
  where $DB=[\partial_{j}B_{i}]_{i,j=1,3}$ and $S=\frac i4\alpha\wedge\alpha$ is the spin operator:
  \begin{align}\label{eq:virialdirac}
    2\int_{\R^3} & \nabla_AuD^2\phi\overline{\nabla_A u}
    -\frac12\int_{\R^3}|u|^2\Delta^2\phi
    +2\Im\int_{\R^3}u\phi'B_{\tau}\cdot\overline{\nabla_A u}
    +2\int_{\R^3}\left[S\cdot\left(
    DB\nabla\phi\right)u\right]\cdot\overline u
    \\
    & = -\frac{d}{dt}\Re\left(\int_{\R^3}u_t(2\nabla\phi\cdot\overline{\nabla_A u}
    +\overline u\Delta\phi)\right).
    \nonumber
  \end{align}
\end{theorem}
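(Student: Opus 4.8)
The plan is to apply the abstract wave virial identity of Theorem~\ref{thm:virial1} to the squared Dirac equation. Starting from a solution $u$ of \eqref{eq:dirac}, the function $u$ also solves the system $u_{tt}+\HH^2 u=0$, so we may take $L=\HH^2$ in \eqref{eq:tetadotdot}. The left-hand side of \eqref{eq:virialdirac} should then emerge as $-\tfrac12([L,[L,\phi]]u,u)=-\tfrac12([\HH^2,[\HH^2,\phi]]u,u)$, after integrating the second-order-in-time identity $\ddot\Theta=-\tfrac12([L,[L,\phi]]u,u)$ once in time and recognizing $\dot\Theta$ — computed via \eqref{eq:tetadot} — as the total time derivative on the right-hand side. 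So the proof splits into two essentially algebraic computations: first, evaluating the double commutator $[\HH^2,[\HH^2,\phi]]$ acting on $u$ and pairing with $\bar u$; second, identifying $\dot\Theta$ with the boundary term $-\tfrac{d}{dt}\Re\int u_t(2\nabla\phi\cdot\overline{\nabla_A u}+\bar u\Delta\phi)$.

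First I would record the basic commutator identities for $\HH$. Since $\HH=i^{-1}\alpha\cdot\nabla_A+m\beta$ and $\phi$ is a scalar real function, $[\HH,\phi]=i^{-1}\alpha\cdot\nabla\phi$ is a bounded (zeroth-order, matrix-valued) operator; note the mass term drops out of every commutator with $\phi$. Then $[\HH^2,\phi]=\HH[\HH,\phi]+[\HH,\phi]\HH$, and iterating, $[\HH^2,[\HH^2,\phi]]=\HH^2[\HH,[\HH,\phi]]+\,2\HH[\HH,\phi]\HH[\HH,\cdot]\text{-type terms}+[\HH,[\HH,\phi]]\HH^2+\dots$; more efficiently, I would use $[\HH^2,[\HH^2,\phi]] = \HH[\HH,[\HH^2,\phi]] + [\HH,[\HH^2,\phi]]\HH$ and expand $[\HH,[\HH^2,\phi]]$. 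The key point is that $\HH^2 = \mathcal D_A^2 + m^2 + m(\mathcal D_A\beta+\beta\mathcal D_A)$; using $\alpha_k\beta=-\beta\alpha_k$ one checks the cross term vanishes, so $\HH^2=-\Delta_A I + \tfrac12 S\cdot B \cdot(\text{appropriate constant}) + m^2 I$, where $-\Delta_A=-(\nabla-iA)^2$ is the magnetic Laplacian and the magnetic-field term comes from $[\partial_j-iA^j,\partial_k-iA^k]=-i(\partial_j A^k-\partial_k A^j)$, i.e.\ the curl. This is precisely where $B$, the spin operator $S=\tfrac i4\alpha\wedge\alpha$, and their interplay enter. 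From here the double commutator is, up to the magnetic Laplacian part, a known object: pairing $-\tfrac12([-\Delta_A,[-\Delta_A,\phi]]u,u)$ with $\bar u$ and integrating by parts produces exactly $2\int\nabla_A u\,D^2\phi\,\overline{\nabla_A u}-\tfrac12\int|u|^2\Delta^2\phi$ (this is the classical magnetic Schr\"odinger virial term, cf.\ \cite{F,FV}), while the commutators involving the $S\cdot B$ term in $\HH^2$, together with the cross commutators between $-\Delta_A$ and $S\cdot B$, generate the remaining two terms $2\Im\int u\phi' B_\tau\cdot\overline{\nabla_A u}$ and $2\int[S\cdot(DB\nabla\phi)u]\cdot\bar u$. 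Careful bookkeeping of which term is tangential ($B_\tau=\tfrac{x}{|x|}\wedge B$, arising from $\phi'=\partial_r\phi$ paired against the first-order magnetic term) versus which involves the full Jacobian $DB$ is the delicate accounting step.

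For the right-hand side, I would compute $\dot\Theta$ via \eqref{eq:tetadot}: $\dot\Theta=\Re([L,\phi]u,u_t)=\Re([\HH^2,\phi]u,u_t)$. Expanding $[\HH^2,\phi]=\HH[\HH,\phi]+[\HH,\phi]\HH$ with $[\HH,\phi]=i^{-1}\alpha\cdot\nabla\phi$, and using that $u_t=i\HH u$ (from \eqref{eq:dirac}), one rewrites $\Re([\HH^2,\phi]u,u_t)$ in terms of $\nabla_A u$, $\nabla\phi$, $\Delta\phi$ and $u_t$. After an integration by parts to move derivatives off $u$, this should collapse to $-\tfrac{d}{dt}$ of $\Re\int u_t(2\nabla\phi\cdot\overline{\nabla_A u}+\bar u\Delta\phi)$ — equivalently, one checks directly that the quantity $\Theta(t)$ with $L=\HH^2$ equals, modulo the target boundary integral, something whose derivative matches. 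Actually the cleanest route may be to bypass $\Theta$ and verify the identity \eqref{eq:virialdirac} by differentiating the right-hand side directly, using \eqref{eq:dirac} to replace $u_{tt}=-\HH^2 u$, and matching with the left-hand side computed above; the abstract Theorem~\ref{thm:virial1} then serves as the organizing principle guaranteeing the two sides agree.

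The main obstacle I anticipate is the double-commutator algebra: correctly expanding $[\HH^2,[\HH^2,\phi]]$, keeping track of the anticommutation relations $\alpha_j\alpha_k+\alpha_k\alpha_j=2\delta_{jk}$, the identity $\HH^2=-\Delta_A+(\text{spin-magnetic term})+m^2$, and the non-commutativity of $A$ with $\nabla$, then integrating by parts without losing or double-counting terms — in particular isolating precisely the tangential component $B_\tau$ in the first-order term and the Jacobian $DB$ in the zeroth-order term, and confirming the mass $m$ disappears entirely from \eqref{eq:virialdirac}. All steps are formal in the sense of the statement (no decay or integrability is claimed), so convergence of integrals and justification of integration by parts are deferred, consistent with assumption (A) allowing approximation by $C_c^\infty$ data.
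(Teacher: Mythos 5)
Your proposal follows the paper's proof in all essentials: take $L=\HH^2$ in Theorem~\ref{thm:virial1}, use the algebraic identity \eqref{eq:spin} to reduce $\HH^2$ to a magnetic Schr\"odinger--type operator plus a zeroth-order spin-magnetic term (the paper's \eqref{eq:square}, $\HH^2=(m^2-\Delta_A)I_4-2S\cdot B$, so your unspecified constant is $-2$), compute $[\HH^2,\phi]=-[\Delta_A,\phi]=-2\nabla\phi\cdot\nabla_A-(\Delta\phi)$ to obtain the right-hand side from \eqref{eq:tetadot}, and evaluate the double commutator for the left-hand side. One bookkeeping correction worth making explicit, since you flag exactly this as the delicate point: the term $2\Im\int u\,\phi'B_\tau\cdot\overline{\nabla_A u}$ is \emph{not} produced by the cross commutators with $S\cdot B$. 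It is already contained in the pure magnetic-Laplacian double commutator $\bigl([\Delta_A,[\Delta_A,\phi]]u,u\bigr)$ (this is formula (2.18) of \cite{FV}, and it arises because the covariant derivatives $\partial_j-iA^j$ do not commute). The cross commutator $2[S\cdot B,[\Delta_A,\phi]]$ is zeroth order and contributes \emph{only} the last term, $-4S\cdot(DB\nabla\phi)$, via $[B,\nabla_A]=-DB$. With that attribution fixed, the rest of your outline reproduces the paper's argument.
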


\begin{remark}\label{rem:radial}
  In the following, we will always consider radial multipliers $\phi$,
  in which case the last term at the left-hand side of
  \eqref{eq:virialdirac} simplifies to
  \begin{equation*}
    S\cdot DB\nabla\phi = \phi'S\cdot\partial_rB.
  \end{equation*}
\end{remark}

In order to deduce a smoothing estimate from the virial identity,
it will be necessary to impose suitable smallness conditions on the
components of $B$ appearing in \eqref{eq:virialdirac}, for which no natural
positivity assumption holds in general.

\subsection{Local smoothing for the magnetic Dirac Equation}

As a first application of the virial identity, we prove some
smoothing estimates for the magnetic Dirac equation. For $f:\R^3\to\C$,
denote by
\begin{equation*}  \|f\|_{L^p_rL^\infty(S_r)}:=\|\sup_{|x|=r}|f|\,\|_{L^p_r}=\left(\int_0^{+\infty}
  (\sup_{|x|=r}|f|)^p dr\right)^{\frac1p}.
\end{equation*}
Moreover, by $\nabla_A^r u$ and $\nabla_A^\tau u$ we denote,
respectively, the radial and tangential components of the covariant
gradient $\nabla_{A}=\nabla-iA(t,x)$:
\begin{equation}\label{eq:radtan}
  \nabla^{r}_{A}u=\frac{x}{|x|}\cdot \nabla_{A} u,\qquad
  \nabla^{\tau}_{A}u=\nabla_{A}u-\frac{x}{|x|}\nabla^{r}_{A}u,
\end{equation}
so that
\begin{equation*}
  |\nabla_A^r u|^2+|\nabla_A^\tau u|^2=|\nabla_Au|^2.
\end{equation*}
We can now state our main result.

\begin{theorem}\label{thm:smoothing}
  Let $\HH$ satisfy the self-adjointness assumption (A). Let
  $B=\mathop\mathrm{curl}A = B_{1}+B_{2}$
  with $B_{2}\in L^{\infty}(\mathbb{R}^{n})$
  and inroduce the quantities
  \begin{equation*}
    C_{0}=
    \||x|^{2}B_{1}\|_{L^{\infty}(\mathbb{R}^{n})},\qquad
    C_{1}=
    \||x|^\frac32B_{\tau}\|_{L^2_rL^\infty(S_r)},
    \qquad
    C_{2}=
    \||x|^2\partial_rB\|_{L^1_rL^\infty(S_r)}.
  \end{equation*}
  We shall assume the smallness conditions
  \begin{equation}\label{eq:condition}
    C_{0}<\frac14,\qquad{}
    C_{1}^{2}+3C_{2}+C_{1}\sqrt{C_{1}^{2}+6C_{2}}\le1
  \end{equation}
  and that the $L^{\infty}$ part of $B$ is absent in the massless case:
  \begin{equation}\label{eq:condition2}
    m=0 \quad \implies \quad B_{2}\equiv0.
  \end{equation}
  Then for all $f\in L^{2}$, the following estimate holds:
  \begin{equation}\label{eq:smoodirac}
    \sup_{R>0}\frac1R\int_{-\infty}^{+\infty}\int_{|x|\leq
    R}|e^{it\HH}f|^2dxdt\lesssim\|f\|_{L^2}^2.
  \end{equation}
  Assume moreover that
  the second inequality in \eqref{eq:condition} is strict;
  then for any
  $f\in D(\HH)$ the following estimate is true:
  \begin{align}
 \sup_{R>0}\frac1R\int_{-\infty}^{+\infty}&\int_{|x|\leq
    R}  |\nabla_A e^{it\HH}f|^2dxdt+
    \|e^{it\HH}f\|_{L^\infty_xL^2_t}^2
    \label{eq:smoodirac2}
    \\
    +\int_{-\infty}^{+\infty}\int_{\R^3} & \frac{|\nabla_A^\tau e^{it\HH}f|^2}{|x|}dxdt
    +\sup_{R>0}\frac1{R^2}\int_{-\infty}^{+\infty}\int_{|x|=R}|e^{it\HH}f|^2d\sigma dt
    \lesssim\|\HH f\|_{L^2}^2.
    \nonumber
  \end{align}
\end{theorem}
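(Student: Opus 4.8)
The plan is to derive both estimates from the virial identity \eqref{eq:virialdirac} by testing it against a well chosen family of radial multipliers $\phi$, integrating in $t$ over $\R$, and absorbing the magnetic terms into the (nonnegative) leading quantities; the conditions \eqref{eq:condition} are precisely what make this absorption work. Two preliminary facts are needed. First, $u=e^{it\HH}f$ solves $u_{tt}+\HH^2u=0$ with $u(0)=f$, $u_t(0)=i\HH f$, and the norms $\|u(t)\|_{L^2}=\|f\|_{L^2}$, $\|u_t(t)\|_{L^2}=\|\HH f\|_{L^2}$ are conserved. Second, the Lichnerowicz identity $\HH^2=-\sum_j\nabla_{A,j}^2+2S\cdot B+m^2$ gives $\|\HH u\|_{L^2}^2=\|\nabla_Au\|_{L^2}^2+2(S\cdot Bu,u)+m^2\|u\|_{L^2}^2$; writing $B=B_1+B_2$, bounding $|2(S\cdot B_1u,u)|\le\int|B_1||u|^2\le C_0\int|x|^{-2}|u|^2\le 4C_0\|\nabla_Au\|_{L^2}^2$ via the magnetic Hardy inequality (Proposition \ref{thm:hardy}), and using $C_0<1/4$, one obtains $\sup_t\|\nabla_Au(t)\|_{L^2}\lesssim\|\HH f\|_{L^2}$ (in the massive case $|\HH|\ge m$ absorbs the $\|B_2\|_{L^\infty}\|f\|_{L^2}^2$ term, in the massless case $B_2\equiv0$ by \eqref{eq:condition2}). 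This is the only role of the hypothesis $C_0<1/4$.

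For a radial $\phi$, Remark \ref{rem:radial} and the splitting of the Hessian $D^2\phi$ into radial and tangential parts rewrite the left side of \eqref{eq:virialdirac} as $2\int(\phi''|\nabla_A^ru|^2+\frac{\phi'}{|x|}|\nabla_A^\tau u|^2)-\frac12\int|u|^2\Delta^2\phi+2\Im\int u\phi'B_\tau\cdot\overline{\nabla_Au}+2\int\phi'(S\cdot\partial_rB)u\cdot\overline u$. Integrating in $t$, the right side of \eqref{eq:virialdirac} becomes the boundary term $-\big[\Re\int u_t(2\nabla\phi\cdot\overline{\nabla_Au}+\overline u\,\Delta\phi)\big]_{-\infty}^{+\infty}$, which for $\phi$ with $\nabla\phi$ bounded and $\Delta\phi=O(|x|^{-1})$ is controlled uniformly in $t$ by $\|u_t\|_{L^2}(\|\nabla_Au\|_{L^2}+\||x|^{-1}u\|_{L^2})\lesssim\|\HH f\|_{L^2}^2$, again via Hardy and the previous step. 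For any $\phi$ with $|\phi'|\le1$, the two magnetic terms are estimated by Cauchy--Schwarz in the mixed norms $L^2_rL^\infty(S_r)$, $L^1_rL^\infty(S_r)$: one gets $|2\Im\int\!\int u\phi'B_\tau\cdot\overline{\nabla_Au}|\le 2C_1\sqrt{A_3A_4}$ and $|2\int\!\int\phi'(S\cdot\partial_rB)u\cdot\overline u|\le 3C_2A_4$, where $A_3=\int\!\int|x|^{-1}|\nabla_A^\tau u|^2$ and $A_4=\sup_RR^{-2}\int\!\int_{|x|=R}|u|^2$ are two of the quantities on the left of \eqref{eq:smoodirac2} (here $B_\tau\perp x/|x|$, so only $\nabla_A^\tau u$ appears).

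The concrete choices are: the convex family $\phi=\phi_R$ with $\phi_R'(|x|)=\min(|x|/R,1)$, for which $\phi_R''\ge0$, $\phi_R'/|x|\ge0$, and the distributional $\Delta^2\phi_R$ is supported on $\{|x|=R\}$ with a negative singular part contributing a multiple of $R^{-2}\int_{|x|=R}|u|^2$; taking $\sup_{R>0}$ recovers $A_1=\sup_RR^{-1}\int\!\int_{|x|\le R}|\nabla_Au|^2$ and $A_4$. And $\phi$ behaving like $|x|$ at infinity (smoothed at the origin), for which $\phi''\ge0$, $\phi'/|x|\sim|x|^{-1}$, and $-\frac12\Delta^2\phi$ is a positive Dirac mass at the origin; this controls $A_3$, and together with the fundamental theorem of calculus along radii (or with translated copies $\phi\sim|x-y|$) also $A_2=\|u\|_{L^\infty_xL^2_t}^2$. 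In all cases $|\phi'|\le1$, so the bounds on the magnetic terms apply with the same $C_1,C_2$. Summing the resulting inequalities and writing $N=A_1+A_2+A_3+A_4$, one arrives at an inequality of the shape $cN\le\|\HH f\|_{L^2}^2+2C_1\sqrt{A_3A_4}+3C_2A_4$, from which $A_3,A_4$ must be recovered in terms of $\|\HH f\|_{L^2}^2$. This is the crux: solving the associated quadratic inequality in $\sqrt{A_3},\sqrt{A_4}$ produces the discriminant $C_1^2+6C_2$, and the requirement that the surviving coefficient of $N$ stay nonnegative is exactly $C_1^2+3C_2+C_1\sqrt{C_1^2+6C_2}\le1$; under the strict inequality a definite fraction of $N$ survives, which is \eqref{eq:smoodirac2}. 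Carrying out this bookkeeping with sharp constants — closing the loop between the error terms and the very quantities into which they are absorbed — is the main technical obstacle.

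The lower-order estimate \eqref{eq:smoodirac} has $L^2$ data and no covariant derivative of $u$, so the boundary term above cannot be bounded by $\|f\|_{L^2}^2$ directly; instead I would argue by duality. By $TT^*$ and the Christ--Kiselev lemma, \eqref{eq:smoodirac} is equivalent to the retarded inhomogeneous bound for $u_{tt}+\HH^2u=g$ with $u(0)=u_t(0)=0$, to which the virial identity (with its extra forcing terms) applies verbatim; now the only conservation law used is $\|u(t)\|_{L^2}=\|f\|_{L^2}$, the forcing terms pair with $g$ in the predual of the Morrey--Campanato space $(\sup_RR^{-1}\int\!\int_{|x|\le R}|\cdot|^2)^{1/2}$ and are absorbed exactly as before, the same smallness \eqref{eq:condition} closing the argument.
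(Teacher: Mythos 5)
Your treatment of the derivative estimate \eqref{eq:smoodirac2} follows the paper quite closely: radial multipliers of the two shapes you describe (a convex truncated one for the Morrey--Campanato pieces and a linear-at-infinity one for the $\int|x|^{-1}|\nabla_A^\tau u|^2$ and $\|u\|_{L^\infty_xL^2_t}^2$ pieces), Cauchy--Schwarz in $L^2_rL^\infty(S_r)$ and $L^1_rL^\infty(S_r)$ against the mixed norms, the quadratic inequality in the two error quantities whose discriminant produces $C_1^2+6C_2$, and the boundary term controlled through the Hardy inequality of Proposition~\ref{thm:hardy} with $C_0<1/4$. The paper packages the two multipliers into a single rescaled $\phi_R$ with $\phi_R'=M+\dots$ where $M$ is chosen to optimize the quadratic form, but that is a cosmetic difference.

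Where your proposal genuinely departs from the paper, and where it has a gap, is the $L^2\to L^2$ bound \eqref{eq:smoodirac}. You correctly observe that the boundary term in time cannot be bounded by $\|f\|_{L^2}^2$ directly, since it naturally costs a factor $\|\HH f\|_{L^2}$. Your proposed remedy --- pass to the retarded inhomogeneous problem and close by $TT^*$ and Christ--Kiselev with the forcing paired in the predual of the Morrey--Campanato norm --- is left entirely unsubstantiated: the virial identity is an energy identity for the homogeneous wave/Dirac flow, and you would still have to establish the $L^2_tX^*\to L^2_tX$ bound for the truncated propagator from scratch; nothing in the multiplier computation produces that bound, and the Christ--Kiselev lemma only lets you pass from the untruncated to the truncated integral, it does not create the estimate. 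The paper instead closes this step with a short algebraic trick which your proposal misses entirely: starting from the intermediate bound $\int_{-T}^T\|\mathcal{D}_Au\|_X^2\,dt\lesssim\|\mathcal{D}_Au(\pm T)\|_{L^2}^2$ (which costs one derivative), one takes $g\in\operatorname{Ran}\mathcal{D}_A$ (dense by Proposition~\ref{thm:hardy}), picks $f$ with $\mathcal{D}_Af=g$, and considers the flow with the \emph{opposite mass} $iu_t=-m\beta u+\mathcal{D}_Au$, $u(0)=f$. Since $\mathcal{D}_A$ anticommutes with $\beta$, applying $\mathcal{D}_A$ to this equation shows that $v=\mathcal{D}_Au$ solves the original equation $iv_t=\HH v$ with $v(0)=g$; the intermediate bound then reads $\int\|v\|_X^2\,dt\lesssim\|v(\pm T)\|_{L^2}^2=\|g\|_{L^2}^2$ by unitarity, i.e.\ exactly \eqref{eq:smoodirac}. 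This step is the crux of the $L^2$ estimate and cannot be replaced by a generic duality argument without new input; you should supply it (or an actual proof of the inhomogeneous bound you invoke) to close the proof.
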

\begin{remark}\label{rem:gauge}
  Notice that all the assumptions in the smoothing Theorem \ref{thm:smoothing} are expressed in terms of the magnetic field $B$, and consequently the gauge invariance of the result is preserved.
\end{remark}
\begin{example}\label{ex:2}
  Explicit examples of magnetic fields satisfying assumption
  \eqref{eq:condition} are of the following form
  \begin{equation}\label{eq:perturb}
    \omega\left(\frac{x}{|x|}\right)\frac{x}{|x|}
    +\epsilon B(x),
  \end{equation}
  where $\omega$ is a smooth function on the unit sphere, while
   $\epsilon$ is sufficiently small and $B:\R^3\to\R^3$ satisfies
  \begin{equation*}
    |B_\tau(x)|\leq\frac{1}{|x|^{2-\delta}+|x|^{2+\delta}},
    \qquad
    |\partial_rB|\leq\frac{1}{|x|^{3-\delta}+|x|^{3+\delta}},
  \end{equation*}
  for some $\delta>0$.
\end{example}

In the proof of Theorem \ref{thm:smoothing}, we shall use
the following Hardy inequality for the magnetic Dirac operator,
proved in the Appendix at the end of the paper.
Compare with \cite{DELV} for parallel results.

\begin{proposition}\label{thm:hardy}
  Let $B=\mathop \mathrm{curl}A=B_{1}+B_{2}$ and assume that
  \begin{equation}\label{eq:condC0}
    C_{0}=
    \||x|^{2}B_{1}\|_{L^{\infty}(\mathbb{R}^{n})}<\infty,\qquad
    \|B_{2}\|_{L^{\infty}(\mathbb{R}^{n})}<\infty
  \end{equation}
  Then, for any $f:\R^3\to\C^4$ such that $\mathcal \HH f\in L^2$,
  and any $\epsilon<1$,
  the following inequality holds:
  \begin{equation}\label{eq:hardy}
    m^{2}\int|f|^{2}+
    \left(\frac{1-\epsilon}4-C_{0}\right)
    \int\frac{|f|^2}{|x|^2}+
    \epsilon\int|\nabla_{A}f|^{2}
    \leq
    \left(1+\frac{\|B_{2}\|_{L^{\infty}}}{m^{2}}\right)
    \int_{\R^3}|\mathcal
    \HH f|^2.
  \end{equation}
\end{proposition}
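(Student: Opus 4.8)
The plan is to reduce the inequality \eqref{eq:hardy} to an algebraic identity for the squared operator $\HH^2$, combined with the classical Hardy inequality and the diamagnetic inequality. I would first compute $\HH^2$. Since the entries of $\beta$ are constant and $\alpha_k\beta+\beta\alpha_k=0$, the mixed terms cancel ($m\beta\mathcal D_A+m\mathcal D_A\beta=0$), so $\HH^2=\mathcal D_A^2+m^2$. For $\mathcal D_A^2=-(\alpha\cdot\nabla_A)^2$ I would write $\alpha_j\alpha_k=\delta_{jk}+\tfrac12[\alpha_j,\alpha_k]$: the first piece contracts the two covariant derivatives into the magnetic Laplacian $\Delta_A=\sum_j(\partial_j-iA^j)^2$, and the commutator piece produces $\tfrac14\sum_{j,k}[\alpha_j,\alpha_k][\nabla_A^j,\nabla_A^k]$; since $[\nabla_A^j,\nabla_A^k]=-i\sum_l\varepsilon_{jkl}B_l$ and $\sum_{j,k}\varepsilon_{jkl}[\alpha_j,\alpha_k]=2(\alpha\wedge\alpha)_l$, this equals $-2\,S\cdot B$, where $S=\tfrac i4\alpha\wedge\alpha$ is the spin operator of Theorem~\ref{thm:virial2}. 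Thus
\begin{equation*}
  (\alpha\cdot\nabla_A)^2=\Delta_A-2\,S\cdot B,\qquad \HH^2=-\Delta_A+m^2+2\,S\cdot B .
\end{equation*}

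Next, for $f\in C^\infty_c(\R^3;\C^4)$ I would take the inner product against $f$ and integrate by parts to obtain
\begin{equation*}
  \int_{\R^3}|\HH f|^2=\int_{\R^3}|\nabla_A f|^2+m^2\int_{\R^3}|f|^2+2\int_{\R^3}(S\cdot B)f\cdot\overline f ,
\end{equation*}
the last integral being real since $S\cdot B$ is a Hermitian matrix. The crucial point is that $S\cdot n$ has operator norm $\tfrac12$ for every unit vector $n$ (spin $\tfrac12$), so $|2(S\cdot B)w\cdot\overline w|\le|B|\,|w|^2$ pointwise; splitting $B=B_1+B_2$ and using the definition of $C_0$,
\begin{equation*}
  \Bigl|2\int_{\R^3}(S\cdot B)f\cdot\overline f\Bigr|\le\int_{\R^3}|B_1|\,|f|^2+\int_{\R^3}|B_2|\,|f|^2\le C_0\int_{\R^3}\frac{|f|^2}{|x|^2}+\|B_2\|_{L^\infty}\int_{\R^3}|f|^2 .
\end{equation*}
I would then apply the diamagnetic inequality $|\nabla|f||\le|\nabla_A f|$ together with the classical Hardy inequality $\int_{\R^3}|\nabla g|^2\ge\tfrac14\int_{\R^3}|g|^2/|x|^2$, and split $\int|\nabla_A f|^2=\epsilon\int|\nabla_A f|^2+(1-\epsilon)\int|\nabla_A f|^2\ge\epsilon\int|\nabla_A f|^2+\tfrac{1-\epsilon}{4}\int|f|^2/|x|^2$, to reach
\begin{equation*}
  \int_{\R^3}|\HH f|^2+\|B_2\|_{L^\infty}\int_{\R^3}|f|^2\ge\epsilon\int_{\R^3}|\nabla_A f|^2+\Bigl(\frac{1-\epsilon}{4}-C_0\Bigr)\int_{\R^3}\frac{|f|^2}{|x|^2}+m^2\int_{\R^3}|f|^2 .
\end{equation*}

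To conclude I must move $\|B_2\|_{L^\infty}\int|f|^2$ to the right-hand side. Here I would use that $\mathcal D_A$ is symmetric, hence $\mathcal D_A^2\ge0$ and $\int_{\R^3}|\HH f|^2=\|\mathcal D_A f\|_{L^2}^2+m^2\|f\|_{L^2}^2\ge m^2\int_{\R^3}|f|^2$, so $\|B_2\|_{L^\infty}\int|f|^2\le\tfrac{\|B_2\|_{L^\infty}}{m^2}\int|\HH f|^2$; substituting gives exactly \eqref{eq:hardy}. In the massless case $m=0$ one has $B_2\equiv0$, the term $m^2\int|f|^2$ disappears and $\|B_2\|_{L^\infty}/m^2$ is read as $0$, and the same chain of inequalities yields the result directly. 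Finally, assumption (A) makes $C^\infty_c$ a core for $\HH$, so for general $f$ with $\HH f\in L^2$ I would take $f_n\in C^\infty_c$ with $f_n\to f$ and $\HH f_n\to\HH f$ in $L^2$; applying the inequality to $f_n-f_m$ — which, when $C_0<\tfrac14$ and $\epsilon$ is small enough that $\tfrac{1-\epsilon}{4}-C_0>0$, controls $\|\nabla_A(f_n-f_m)\|_{L^2}$ and $\||x|^{-1}(f_n-f_m)\|_{L^2}$ — shows that $\nabla_A f_n$ and $|x|^{-1}f_n$ converge in $L^2$, and the inequality passes to the limit. I expect the main obstacle to be pinning down the operator identity for $\HH^2$ exactly, in particular matching the constant $2$ in $2\,S\cdot B$ with the sharp bound $\|S\cdot n\|=\tfrac12$, which is what forces the coefficient of $\int|f|^2/|x|^2$ to be exactly $C_0$ and nothing worse; once that identity is in hand, the rest is bookkeeping plus a routine density argument.
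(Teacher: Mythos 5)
Your proof is correct and follows essentially the same route as the paper's: expand $\HH^2$ to isolate the spin term $2S\cdot B$, bound it pointwise via $\|S\cdot n\|_{\C^4\to\C^4}=\tfrac12|n|$, apply the magnetic Hardy inequality $\tfrac14\int|f|^2/|x|^2\le\int|\nabla_A f|^2$, and absorb the $\|B_2\|_{L^\infty}\int|f|^2$ term through $m^2\int|f|^2\le\int|\HH f|^2$. The only differences are cosmetic: you derive $\HH^2=(m^2-\Delta_A)I_4-2S\cdot B$ from the anticommutation relations directly rather than citing the algebraic identity \eqref{eq:spin}, obtain the magnetic Hardy inequality via diamagnetic plus classical Hardy instead of citing \cite{FV}, and make explicit the density argument that the paper leaves to assumption (A).
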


\noindent
When $m=0$, $B_{2}\equiv0$, the right hand side is to be intepreted
simply as $\int|\HH f|^{2}dx$.

\subsection{Strichartz Estimates}\label{subsec:stri}

As a natural application of the local smoothing estimates,
we now derive from them the Strichartz estimates for the perturbed
Dirac equation. We recall that the solution
$u(t,x)=e^{it \mathcal{D}}f$ of the free massless Dirac system
with initial value $u(0,x)=f(x)$ satisfies
\begin{equation}\label{eq:strhom}
  \|e^{it \mathcal{D}}f\|
  _{L^{p}\dot H_{q}^{\frac 1q-\frac1p-\frac12}}
  \lesssim\|f\|_{L^{2}},
\end{equation}
for all wave admissible $(p,q)$
\begin{equation}\label{eq:wadm}
  \frac2p+\frac{2}{q}=\frac{2}{2},\qquad
  2< p\le \infty,\qquad
  \infty> q\ge 2
\end{equation}
while in the massive case $m\neq0$ we have
\begin{equation}\label{eq:strhommass}
  \|e^{it(\mathcal{D}+m\beta)}f\|
       _{L^{p} H_{q}^{\frac 1q-\frac1p-\frac12}}
  \lesssim\|f\|_{L^{2}},
\end{equation}
for all Schr\"odinger admissible $(p,q)$
\begin{equation}\label{eq:sadm}
  \frac2p+\frac 3q=\frac 32,\qquad
  2\le p\le \infty,\qquad
  6\ge q\ge 2.
\end{equation}
For a proof of these estimates see \cite{pda-lf2}.
In the perturbed case we obtain exactly the same results:

\begin{theorem}\label{thm:stri}
  Assume $\HH$ and $\mathcal D_A$ satisfy (A).
  Moreover, assume that \eqref{eq:condition}, \eqref{eq:condition2}
  hold and that
  \begin{equation}\label{eq:decay}
    \sum_{j\in\Z}2^j\sup_{|x|\sim2^j}|A|<\infty.
  \end{equation}
  Then the massless perturbed flow satisfies the Strichartz estimates
  \begin{equation}\label{eq:stridir2}
    \|e^{it\mathcal D_A}f\|_{L^p\dot H^{\frac1q-\frac1p-\frac12}_q}
    \lesssim\|f\|_{L^{2}}
  \end{equation}
  for all wave admissible couple $(p,q)$,
  (in particular, $p\neq2$), while in the massive case we have,
  for all Schr\"odinger admissible couple $(p,q)$,
  \begin{equation}\label{eq:stridir3}
    \|e^{it\HH}f\|_{L^p H^{\frac1q-\frac1p-\frac12}_q}
    \lesssim\|f\|_{L^{2}} \qquad
    (m\neq0).
  \end{equation}
\end{theorem}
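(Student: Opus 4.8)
The plan is to obtain \eqref{eq:stridir2}--\eqref{eq:stridir3} perturbatively from the free estimates \eqref{eq:strhom}--\eqref{eq:strhommass}, using the local smoothing estimate \eqref{eq:smoodirac} as the a priori control on the perturbed flow. Write the massless perturbed operator as $\mathcal D_A=\mathcal D-\alpha\cdot A$ and the massive one as $\HH=\HH_0-\alpha\cdot A$ with $\HH_0=\mathcal D+m\beta$; assumption (A) makes $e^{it\HH}$ and $e^{it\mathcal D_A}$ well defined unitary groups, and for data in a dense class Duhamel's formula gives
\begin{equation*}
  e^{it\HH}f=e^{it\HH_0}f-i\int_0^t e^{i(t-s)\HH_0}(\alpha\cdot A)\,e^{is\HH}f\,ds
\end{equation*}
(and the analogous formula with $\HH,\HH_0$ replaced by $\mathcal D_A,\mathcal D$ in the massless case). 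The first term on the right is controlled by the free estimates. For the second term the argument reduces to two ingredients: a bound for the source $(\alpha\cdot A)e^{is\HH}f$ in a suitable dual smoothing space, and an inhomogeneous version of the free Strichartz estimates with data in that space.

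Introduce the scale invariant smoothing norm and its dual,
\begin{equation*}
  \|v\|_{\dot X}=\sup_{j\in\Z}\Big(2^{-j}\!\!\int_\R\!\int_{2^{j-1}\le|x|\le 2^j}\!\!|v|^2\Big)^{\!1/2},\qquad
  \|G\|_{\dot X^*}=\sum_{j\in\Z}\Big(2^{j}\!\!\int_\R\!\int_{2^{j-1}\le|x|\le 2^j}\!\!|G|^2\Big)^{\!1/2},
\end{equation*}
so that $\big|\int_\R\!\int_{\R^3}G\,\overline v\big|\le\|G\|_{\dot X^*}\|v\|_{\dot X}$. Then, by Cauchy--Schwarz on each dyadic shell, the decay assumption \eqref{eq:decay} is exactly what yields
\begin{equation*}
  \|(\alpha\cdot A)v\|_{\dot X^*}\le\Big(\sum_{j\in\Z}2^j\!\!\sup_{2^{j-1}\le|x|\le2^j}\!\!|A|\Big)\|v\|_{\dot X};
\end{equation*}
moreover, since $2^{-j}\int\!\int_{2^{j-1}\le|x|\le2^j}\le\sup_{R>0}R^{-1}\int\!\int_{|x|\le R}$, estimate \eqref{eq:smoodirac} --- valid for $e^{it\HH}$ under \eqref{eq:condition}, \eqref{eq:condition2}, and, in the case $A\equiv0$ (so $B\equiv0$, $C_0=C_1=C_2=0$), also for the free flow $e^{it\HH_0}$ --- gives $\|e^{it\HH}f\|_{\dot X}\lesssim\|f\|_{L^2}$ and $\|e^{it\HH_0}f\|_{\dot X}\lesssim\|f\|_{L^2}$. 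Combining the two displays, $\|(\alpha\cdot A)e^{is\HH}f\|_{\dot X^*}\lesssim\|f\|_{L^2}$.

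It remains to establish the inhomogeneous estimate
\begin{equation*}
  \Big\|\int_0^t e^{i(t-s)\HH_0}G(s)\,ds\Big\|_{L^pH^{\sigma}_q}\lesssim\|G\|_{\dot X^*},\qquad \sigma=\tfrac1q-\tfrac1p-\tfrac12,
\end{equation*}
together with its homogeneous analogue for $\HH_0=\mathcal D$. The non-retarded operator factors as $e^{it\HH_0}\big(\int_\R e^{-is\HH_0}G(s)\,ds\big)$, so the corresponding bound follows by composing the homogeneous Strichartz estimate $\|e^{it\HH_0}g\|_{L^pH^{\sigma}_q}\lesssim\|g\|_{L^2}$ with the dual smoothing estimate $\big\|\int_\R e^{-is\HH_0}G\,ds\big\|_{L^2}\lesssim\|G\|_{\dot X^*}$, the latter being simply the dual of $\|e^{it\HH_0}w\|_{\dot X}\lesssim\|w\|_{L^2}$. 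One then passes from the non-retarded to the retarded operator by the Christ--Kiselev lemma, applied dyadically in the space variable (decomposing $G=\sum_j G\,\mathbf{1}_{2^{j-1}\le|x|\le2^j}$); this requires the time integrability exponents to be strictly ordered, hence $p>2$, which holds for every wave admissible couple --- thus proving \eqref{eq:stridir2} --- and for every Schr\"odinger admissible couple with $p>2$.

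The main difficulty is the double endpoint $(p,q)=(2,6)$ in the massive case, where $p=2$ and Christ--Kiselev is unavailable: one must prove $\big\|\int_0^t e^{i(t-s)\HH_0}G\,ds\big\|_{L^2_tL^6_x}\lesssim\|G\|_{\dot X^*}$ directly. I expect this to need a Keel--Tao type bilinear estimate on the dyadic time pieces together with the smoothing bounds, in the spirit of the endpoint estimates for the magnetic Schr\"odinger equation, the relevant point being that $\HH_0^2=m^2-\Delta$ has no low frequency degeneracy. Once the inhomogeneous estimate is in hand for the required range, the bound $\|(\alpha\cdot A)e^{is\HH}f\|_{\dot X^*}\lesssim\|f\|_{L^2}$ closes the argument; note that no smallness is needed at this stage, precisely because \eqref{eq:smoodirac} holds for the full perturbed propagator rather than being built by a Neumann series --- the smallness having already been spent in Theorem \ref{thm:smoothing}.
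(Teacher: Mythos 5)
Your proposal follows essentially the same route as the paper: Duhamel formula around the free propagator, the homogeneous free Strichartz estimates \eqref{eq:strhom}--\eqref{eq:strhommass}, the dual of the smoothing estimate \eqref{eq:smoodirac} for the free flow to control $\int e^{-is\HH_0}G\,ds$ in $L^2$, the smoothing estimate for the perturbed flow together with \eqref{eq:decay} and H\"older on dyadic shells to bound the source, and Christ--Kiselev to pass from the untruncated to the retarded operator for $p>2$. The endpoint $(p,q)=(2,6)$ in the massive case, which you correctly flag as the one point where Christ--Kiselev fails and a Keel--Tao type bilinear argument is needed, is handled in the paper by invoking Lemma~3 of \cite{IK}, which is precisely the tool you anticipate; your remark that Christ--Kiselev must be applied shell by shell because $\dot X^*$ is not an $L^a_tL^b_x$ space is a sound technicality that the paper leaves implicit.
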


\section{Proof of the Virial Identities}

\begin{proof}[Proof of Theorem \ref{thm:virial1}]
  The proof relies on a direct computation. By
  equation \eqref{eq:wave} we have
  \begin{equation}\label{eq:0}
    \frac{d}{dt}(\phi u_t,u_t)=-2\Re(\phi Lu,u_t),
  \end{equation}
  and
  \begin{equation}\label{eq:00}
    \frac{d}{dt}\Re((2\phi L-L\phi)u,u)=\Re((\phi L+L\phi)u,u_t),
  \end{equation}
  since $\phi$ and $L$ are symmetric operators. Summing
  \eqref{eq:0} and \eqref{eq:00} we get \eqref{eq:tetadot}.
  An additional differentiation gives
  \begin{equation}\label{eq:000}
    \frac{d}{dt}\Re([L,\phi]u,u_t)=\Re([L,\phi]u_t,u_t)-\Re([L,\phi]u,Lu).
  \end{equation}
  Since $[L,\phi]$ is anti-symmetric, we have
  \begin{equation}\label{eq:0000}
    \Re([L,\phi]u_t,u_t)=0
  \end{equation}
  and also
  \begin{equation}\label{eq:00000}
    -\Re([L,\phi]u,Lu)=-\frac12\left\{([L,\phi]u,Lu)+(Lu,[L,\phi]u)\right\}
    =-\frac12([L,[L,\phi]]u,u).
  \end{equation}
  Identities \eqref{eq:000}, \eqref{eq:0000} and \eqref{eq:00000}
  give \eqref{eq:tetadotdot}.
\end{proof}


\begin{proof}[Proof of Theorem \ref{thm:virial2}]
  Let $u$ be a solution to equation \eqref{eq:dirac}.
  Using the identity
  \begin{equation*}
    0=(i\partial_t-\HH)(i\partial_t+\HH)u=(-\partial_{tt}-\HH^2)u,
  \end{equation*}
  we see that $u$ solves a Cauchy problem of the form \eqref{eq:wave}:
  \begin{equation}\label{eq:diracwave}
    \begin{cases}
      u_{tt}+\HH^2u=0
      \\
      u(0)=f
      \\
      u_t(0)=i\HH f
    \end{cases}
    \qquad
    L=\HH^2.
  \end{equation}
  In order to apply
  \eqref{eq:tetadot}, \eqref{eq:tetadotdot}, we need to compute
  explicitly the commutators appearing in the formulas
  with the choice $L=\HH^2$.

  In the following we shall need the spin operator $S$, defined as
  the triplet of matrices
  \begin{equation*}
    S=\frac i4\alpha\wedge\alpha=
      \frac i4(\alpha_{2}\alpha_{3}-\alpha_{3}\alpha_{2},\
               \alpha_{3}\alpha_{1}-\alpha_{1}\alpha_{3},\
               \alpha_{1}\alpha_{2}-\alpha_{2}\alpha_{1}).
  \end{equation*}
  We also recall the formula
  \begin{equation}\label{eq:spin}
    (\alpha\cdot F)(\alpha\cdot G)=F\cdot G+2iS\cdot(F\wedge G)
  \end{equation}
  which holds for any matrix-valued vector fields $F=(F^1,F^2,F^3)$,
  $G=(G^1,G^2,G^3)$,  with
  $F^i,G^i\in\mathcal M_{4\times 4}(\C)$ (see \cite{T} for
  an extensive list of algebraic identities connected to
  Dirac operators). Thus expanding the square $\HH^2$ we have
  \begin{equation}\label{eq:pert}
    \HH^2=\HH_0^2-\HH_0(\alpha\cdot A)-(\alpha\cdot A)
    \HH_0+(\alpha\cdot A)(\alpha\cdot A),
  \end{equation}
  where the unperturbed part is precisely
  \begin{equation}\label{eq:pert1}
    \HH_0^2=(m^{2}-\Delta)I_{4},
  \end{equation}
  and $I_4$ denotes the identity matrix. Using \eqref{eq:spin} we
  compute
  \begin{equation}\label{eq:pert2}
    -\HH_0(\alpha\cdot A)-(\alpha\cdot A)\HH_0+(\alpha\cdot A)(\alpha\cdot
    A)=i(\nabla\cdot A)+i(A\cdot\nabla)+|A|^2-2S\cdot\left(\nabla\wedge
    A+A\wedge\nabla\right).
  \end{equation}
  Now observe that
  \begin{align}
    \nabla\wedge
    A+A\wedge\nabla & =\text{curl}A=B
    \label{eq:pert3}
    \\
    -\Delta+i(\nabla\cdot A)+i(A\cdot\nabla)+|A|^2 & =
    (i \nabla-A)^{2}=
    -\Delta_A.
    \label{eq:pert4}
  \end{align}
  In conclusion, by \eqref{eq:pert}, \eqref{eq:pert1},
  \eqref{eq:pert2}, \eqref{eq:pert3}, \eqref{eq:pert4} we obtain
  \begin{equation}\label{eq:square}
    \HH^2=(m^{2}-\Delta_A)I_4-2S\cdot B.
  \end{equation}
   Analogously, in the massless case we have
  \begin{equation}\label{eq:squareless}
     \mathcal D_A^2=-\Delta_AI_4-2S\cdot B.
 \end{equation}
  Hence the commutator with $\phi$ reduces to
  \begin{equation}\label{eq:comm1a}
    [\HH^2,\phi]=[m^{2},\phi]-[\Delta_A,\phi]-2[S\cdot B,\phi]=
    -[\Delta_A,\phi].
  \end{equation}
  Using the Leibnitz rule
  \begin{equation*}
    \nabla_A(fg)=g\nabla_Af+f\nabla g,
  \end{equation*}
  we arrive at the explicit formula
  \begin{equation}\label{eq:comm1}
    [\HH^2,\phi]=-[\Delta_A,\phi]=-2\nabla\phi\cdot\nabla_A-(\Delta\phi).
  \end{equation}
  Recalling \eqref{eq:teta}, \eqref{eq:tetadot}, we obtain
  \begin{equation}\label{eq:tetadotdirac}
    \dot\Theta(t)=-\Re\left(\int_{\R^3}
      u_t(2\nabla\phi\cdot\overline{\nabla_A u}
      +\overline u\Delta\phi)\right)
  \end{equation}

  We now compute the second commutator between $\HH^2$ and $\phi$.
  By \eqref{eq:square}, \eqref{eq:comm1} we have
  \begin{equation}\label{eq:comm10}
    [\HH^2,[\HH^2,\phi]]=[\Delta_A,[\Delta_A,\phi]]+2[S\cdot
    B,[\Delta_A,\phi]].
  \end{equation}
  The term involving the magnetic Laplacian gives
  \begin{align}
  (u,[\Delta_A,[\Delta_A,\phi]]u) = &
  4\int_{\R^n}\nabla_A uD^2\phi\overline{\nabla_A u}
  -\int_{\R^n}|u|^2\Delta^2\phi
  \label{eq:comm11}
  \\
  &
  +4\Im\int_{\R^n}u\phi'B_\tau\cdot\overline{\nabla_{A}u}
  \nonumber
\end{align}
(see formula (2.18) in \cite{FV} with $V\equiv0$).
By \eqref{eq:comm1}, the last term in \eqref{eq:comm10} is equal to
\begin{equation*}
    [S\cdot B,[\Delta_A,\phi]]=2[S\cdot B,\nabla\phi\cdot\nabla_A]
    =2(S\cdot B\nabla\phi\cdot\nabla_A-\nabla\phi\cdot\nabla_AS\cdot
    B).
\end{equation*}
Both $\phi$ and the components of the field $B$ are scalars;
moreover, we have
\begin{equation*}
  [B,\nabla_A]=-DB,
\end{equation*}
where $DB$ denotes the (matrix) gradient of the field $B$,
and in conclusion
\begin{equation}\label{eq:comm12}
  2[S\cdot B,[\Delta_A,\phi]]=-4S\cdot\left(DB\nabla\phi \right).
\end{equation}
Finally,
identity \eqref{eq:virialdirac} follows from \eqref{eq:tetadotdot},
\eqref{eq:tetadotdirac}, \eqref{eq:comm10}, \eqref{eq:comm11} and
\eqref{eq:comm12}.
\end{proof}

\section{The smoothing estimates}\label{sec.smoo}

The formal computations leading to the
virial identity \eqref{eq:virialdirac} make
sense for sufficiently smooth solutions $u\in C(\mathbb{R},H^{3/2})$ and the choice of multiplier $\phi$ we make below.
Thanks to the density
assumption (A), if we approximate data $f\in L^{2}$ (resp. $D(\HH)$)
with $f_{j}\in C^{\infty}_{c}$,
the corresponding solutions
$u_{j}=e^{it\HH}f_{j}$ will converge to the solution $u=e^{it\HH}f$ in $C([-T,T];L^{2})$
(resp. $C([-T,T];D(\HH))$) for all $T>0$.

We shall apply identity \eqref{eq:virialdirac} to the solution
$u=e^{it\HH}f$ of the problem
  \begin{equation}\label{eq:dirac2}
    \begin{cases}
      iu_t=\HH u
      \\
      u(0)=f
    \end{cases}
  \end{equation}
with an appropriate multiplier function $\phi$.

\subsection{Choice of the multiplier}

Writing $r=|x|$, we define $\phi$ as follows (see \cite{FV})
  \begin{equation*}
  \phi_0(x)=\int_0^{r=|x|}\phi_0'(s)\,ds,
\end{equation*}
where
\begin{equation*}
  \phi'_0=\phi'_0(r)=
  \begin{cases}
    M+\frac13r,
    \qquad
    r\leq1
    \\
    M+\frac12-\frac1{6r^2},
    \qquad
    r>1,
  \end{cases}
\end{equation*}
and $M$ is a positive constant we will choose later. We have
\begin{equation*}
  \phi_0''(r)=
  \begin{cases}
    \frac13,
    \qquad
    r\leq1
    \\
    \frac1{3r^3},
    \qquad
    r>1
  \end{cases}
\end{equation*}
while the bilaplacian is given by
\begin{equation*}
  \Delta^2\phi_0(r)=-4\pi\delta_{x=0}-\delta_{|x|=1}.
\end{equation*}
Moreover, for any $R>0$ we define
\begin{equation*}
  \phi_R(r)=R\phi_0\left(\frac rR\right),
\end{equation*}
so that by rescaling we have
\begin{equation}\label{eq:fi1}
  \phi'_R(r)=
  \begin{cases}
    M+\frac r{3R},
    \qquad
    r\leq R
    \\
    M+\frac12-\frac{R^2}{6r^2},
    \qquad
    r>R
  \end{cases}
\end{equation}
\begin{equation}\label{eq:fi2}
  \phi''_R(r)=
  \begin{cases}
    \frac1{3R},
    \qquad
    r\leq R
    \\
    \frac1R\cdot\frac{R^3}{3r^3},
    \qquad
    r>R
  \end{cases}
\end{equation}
\begin{equation}\label{eq:filapl}
  \Delta\phi_R(r)=
  \begin{cases}
    \frac1R+\frac{2M}{r},
    \qquad
    r\leq R
    \\
    \frac{1+2M}{r},
    \qquad
    r>R
  \end{cases}
\end{equation}
\begin{equation}\label{eq:fibi}
  \Delta^2\phi_R(r)=-4\pi\delta_{x=0}-\frac1{R^2}\delta_{|x|=R}.
\end{equation}
We notice that $\phi'_R,\phi''_R,\Delta\phi_R\geq0$ and moreover
\begin{equation}\label{eq:estimatephi}
  \sup_{r\geq0}\phi'_R(r)\leq M+\frac12
  \qquad
  \sup_{r\geq0}\phi''_R(r)\leq \frac{1}{3R},
  \qquad
  \Delta\phi_R(r)\leq\frac{1+2M}{r}.
\end{equation}

\subsection{Estimate of the RHS in \eqref{eq:virialdirac}}

Consider the expression
\begin{equation*}
  \int_{\R^3}u_t(2\nabla\phi\cdot\overline{\nabla_A u}
    +u\Delta\phi)=
    (u_{t},2\nabla\phi\cdot{\nabla_A u}
      +\overline u\Delta\phi)_{L^{2}}
\end{equation*}
appearing at the right hand side in \eqref{eq:virialdirac}.
Using the equation, we can replace $u_{t}$ with
\begin{equation*}
  u_{t}=-i\HH u=-im \beta u -i \mathcal{D}_{A}u.
\end{equation*}
By the selfadjointess of $\beta$, it is easy to check that
\begin{equation*}
  \Re\left[
  -im(\beta u,2\nabla\phi\cdot\nabla_A u)
  -im(\beta u,\Delta \phi u)\right]=0,
\end{equation*}
so that
\begin{equation*}
  \Re
  (u_{t},2\nabla\phi\cdot{\nabla_A u}+u \Delta \phi)=
    2\Im(\mathcal{D}_A u,\nabla\phi\cdot{\nabla_A u})
      +\Im(\mathcal{D}_A u,\Delta \phi u)
\end{equation*}
and by Young we obtain
\begin{equation}\label{eq:boundary00}
  \left|\Re\left(\int_{\R^3}u_t(2\nabla\phi\cdot\overline{\nabla_A u}
    +\overline u\Delta\phi)\right)\right|
    \leq\frac32\|\mathcal{D}_{A}u\|_{L^2}^2+\|\nabla\phi\cdot\nabla_A u\|_{L^2}^2
    +\frac12\|u\Delta\phi\|_{L^2}^2.
\end{equation}
Recalling \eqref{eq:estimatephi},
and using Proposition \ref{thm:hardy} with the choice
$\epsilon =1-4C_0 $, which is positive in virtue of the
assumption $C_{0}<4^{-1}$, we have
\begin{equation}\label{eq:utgrad}
  \|\nabla\phi\cdot\nabla_A u\|_{L^2}^2
  \leq
  \frac{1}{1-4 C_{0}}
  \left(M+\frac12\right)\|\mathcal D_Au\|_{L^2}^2.
\end{equation}
The third term in \eqref{eq:boundary00}, can
be estimated using \eqref{eq:estimatephi} and
again the Hardy inequality
\eqref{eq:hardy}:
\begin{equation}\label{eq:ardi}
  \|u\Delta\phi\|_{L^2}^2\leq \frac{4}{1-4 C_{0}}(1+2M)\|\mathcal D_Au\|_{L^2}^2.
\end{equation}
Summing up, by \eqref{eq:boundary00}, \eqref{eq:utgrad}
and \eqref{eq:ardi} we conclude that
\begin{equation}\label{eq:boundary}
  \left|\Re\left(\int_{\R^3}u_t(2\nabla\phi\cdot\overline{\nabla_A u}
    +\overline u\Delta\phi)\right)\right|
    \lesssim
    \|\mathcal D_Au\|_{L^2}^2,
\end{equation}
for any $t\in\R$.

\subsection{Estimate of the LHS in \eqref{eq:virialdirac}}

In this subsection, we bound from below the time integral of the LHS in \eqref{eq:virialdirac}.

We shall use the elementary identity
\begin{equation}\label{eq:formula}
  \nabla_A uD^2\phi\overline{\nabla_Au}=
  \frac{\phi'(r)}{r}|\nabla_A^\tau u|^2+\phi''(r)|\nabla_A^ru|^2.
\end{equation}
By \eqref{eq:formula}, \eqref{eq:fi1}, \eqref{eq:fi2} and
\eqref{eq:fibi}, for the first two terms at the LHS of
\eqref{eq:virialdirac} we have
\begin{align}\label{eq:111}
    & 2\int_{\R^3}\nabla_AuD^2\phi_R\overline{\nabla_A u}
    -\frac12\int_{\R^3}|u|^2\Delta^2\phi_R
    \\
    & \geq\frac{2}{3R}\int_{|x|\leq R}|\nabla_A u|^2dx
    +2M\int_{\R^3}\frac{|\nabla_A^\tau u|^2}{|x|}dx
    +2\pi|u(t,0)|^2+\frac{1}{2R^2}\int_{|x|=R}|u|^2d\sigma(x),
    \nonumber
\end{align}
for any $R>0$, where $d\sigma$ denotes the surface measure on the
sphere of radius $R$. For the perturbative term involving $B_\tau$ in
\eqref{eq:virialdirac}, by the H\"older inequality and
\eqref{eq:estimatephi} we obtain
\begin{align}\label{eq:1111}
    & 2\Im\int_{t=-T}^{t=T}\int_{\R^3}u\phi'_RB_{\tau}\cdot\overline{\nabla_A u}dxdt
    \\
    & \geq -(2M+1)\left(\sup_{R>0}\frac1{R^2}\int_{t=-T}^{t=T}\int_{|x|=R}
    |u|^2d\sigma(x)dt\right)^{\frac12}\left(\int_{t=-T}^{t=T}\int_{\R^3}\frac{|\nabla_A^\tau u|^2}{|x|}dxdt\right)^{\frac12}
    \||x|^{\frac32}B_\tau\|_{L^2_rL^\infty(S_r)}.
    \nonumber
\end{align}
For the remaining term in \eqref{eq:virialdirac}, observe that the
operator norm of the components of $S=(S^1,S^2,S^3)$ is
\begin{equation*}
  \|S^k\|_{\C^4\to\C^4}=\frac12;
\end{equation*}
hence we can write
\begin{align}\label{eq:11111}
  & 2\int_{t=-T}^{t=T}\int_{\R^3}|u|^2S\cdot\left[\nabla\phi_RDB\right]dxdt
  \\
  & \geq -3\left(M+\frac12\right)\left(\sup_{R>0}\frac1{R^2}\int_{t=-T}^{t=T}\int_{|x|=R}
    |u|^2d\sigma_R(x)dt\right)\||x|^2\partial_rB\|_{L^1_rL^\infty(S_r)},
    \nonumber
\end{align}
since $\phi_R$ is radial. Now we introduce the norms
\begin{align*}
  \|u\|_{X,T}^2 & :=\sup_{R>0}\frac1R\int_{t=-T}^{t=T}\int_{|x|\leq R}|u|^2dxdt
  \\
  \|u\|_{Y,T}^2 & :=\sup_{R>0}\frac1{R^2}\int_{t=-T}^{t=T}\int_{|x|=R}|u|^2d\sigma(x)dt.
\end{align*}
Taking the supremum over $R>0$ in \eqref{eq:111} and summing with
\eqref{eq:1111}, \eqref{eq:11111}, we obtain
\begin{align}\label{eq:LHS}
  & 2\int_{t=-T}^{t=T}\int_{\R^3}\nabla_AuD^2\phi_R\overline{\nabla_A u}dt
    -\frac12\int_{t=-T}^{t=T}\int_{\R^3}|u|^2\Delta^2\phi_Rdt
    \\
  & +2\Im\int_{t=-T}^{t=T}\int_{\R^3}u\phi'_RB_{\tau}\cdot\overline{\nabla_A u}
    +2\int_{t=-T}^{t=T}\int_{\R^3}|u|^2S\cdot\left[\nabla\phi_RDB\right]dt
    \nonumber
    \\
  & \geq \frac23\|\nabla_Au\|_{X,T}^2+\left(\frac12-3\left(M+\frac12\right)
  \||x|^2\partial_rB\|_{L^1_rL^\infty(S_r)}\right)\|u\|_{Y,T}^2
    \nonumber
    \\
  & -(2M+1)\left(\int_{t=-T}^{t=T}\||x|^{-\frac12}\nabla_A^\tau
    u\|_{L^2}^2dt\right)^{\frac12}\||x|^{\frac32}B_\tau\|_{L^2_rL^\infty(S_r)}\|u\|_{Y,T}
    \nonumber
    \\
  & +2M\int_{t=-T}^{t=T}\||x|^{-\frac12}\nabla_A^\tau u\|_{L^2}^2dt+2\pi\int_{t=-T}^{t=T}|u(t,0)|^2dt.
  \nonumber
\end{align}
In order to deduce \eqref{eq:smoodirac},
\eqref{eq:smoodirac2}, we need to ensure the positivity of the
right-hand side of \eqref{eq:LHS}. Define $p,q$ as
\begin{equation*}
  p=\left(\int_{t=-T}^{t=T}\||x|^{-\frac12}\nabla_A^\tau
    u\|_{L^2}^2\,dt\right)^{\frac12},
    \qquad
  q=\|u\|_Y,
\end{equation*}
while $C_{1},C_{2}$ are defined in the statement of the Theorem.
Then we are led to study the inequality
\begin{equation}\label{eq:algebra}
  2Mp^2+\left(\frac12-3\left(M+\frac12\right)C_2\right)q^2-(2M+1)C_1pq\geq0
\end{equation}
and it is immediate to check that \eqref{eq:algebra} holds
for all $p,q\geq0$ and $M=C_1/(2\sqrt{C_1^2+6C_2})$, provided $C_1$ and $C_2$ satisfy
\eqref{eq:condition}. Thus, dropping the corresponding nonnegative
terms, we arrive at the estimate
\begin{align}\label{eq:LHSb}
 &2\int_{t=-T}^{t=T}\int_{\R^3}\nabla_AuD^2\phi_R\overline{\nabla_A u}dt
    -\frac12\int_{t=-T}^{t=T}\int_{\R^3}|u|^2\Delta^2\phi_Rdt
    \\
  & +2\Im\int_{t=-T}^{t=T}\int_{\R^3}u\phi'_RB_{\tau}\cdot\overline{\nabla_A u}
    +2\int_{t=-T}^{t=T}\int_{\R^3}|u|^2S\cdot\left[\nabla\phi_RDB\right]dt
    \nonumber
    \\
  & \geq \frac23\|\nabla_Au\|_{X,T}^2\ge
    \frac23\|\mathcal{D}_Au\|_{X,T}^2
  \nonumber
\end{align}
where in the last step we used the pointwise inequality
$|\mathcal{D}_Au|\le|\nabla_{A}u|$.
We now integrate in time the virial identity on $[-T,T]$, and using
\eqref{eq:LHSb} and \eqref{eq:boundary} we get
\begin{equation}\label{eq:intermed}
  \int_{-T}^{T}
  \|\mathcal D_Au\|_X^2 dt\lesssim
  \|\mathcal D_Au(T)\|_{L^2}^2+\|\mathcal D_Au(-T)\|_{L^2}^2.
\end{equation}
Notice that all the above computations do not depend on the
sign of the mass $m$.

Let us consider the range of $D_A$, form Proposition \ref{thm:hardy}, we have that for $C_0<1/4$, $0\not\in {\rm ker}(D_A)$ so ${\rm Ran}(D_A)$ is either $L^2$ if $0$ is not in the essential spectrum of $D_A$ or it is dense in $L^2$.
Now fix an arbitrary $g\in  {\rm Ran}(D_A)$, there exists  $f\in D(D_A)=D(\HH)$ with $\mathcal{D}_{A}f=g$; we consider then
the solution $u(t,x)$ of the problem
\begin{equation*}
  iu_{t}=-m \beta u+\mathcal{D}_{A}u,\qquad
  u(0,x)=f
\end{equation*}
with opposite mass,
and notice that $u$ satisfies estimate \eqref{eq:intermed}. If we
apply to this equation the operator $\mathcal{D}_{A}$ we obtain, by
the anticommutation rules,
\begin{equation*}
  i(\mathcal{D}_{A}u)_{t}=\beta m (\mathcal{D}_{A}u)+
   \mathcal{D}_{A}(\mathcal{D}_{A}u),\qquad
   (\mathcal{D}_{A}u(0,x))=\mathcal{D}_{A}f
\end{equation*}
or, in other words, the function $v=\mathcal{D}_{A}u$ solves the
problem
\begin{equation*}
  iv_{t}=\HH v,\qquad v(0,x)=g \quad \implies \quad
  v=e^{it\HH}g.
\end{equation*}
Hence \eqref{eq:intermed} can be written
\begin{equation}\label{eq:finalT}
  \int_{-T}^{T}
  \|v\|_X^2 dt\lesssim
  \| v(T)\|_{L^2}^2+\|v(-T)\|_{L^2}^2=
  2\|g\|_{L^{2}}
\end{equation}
and letting $T\to \infty$ we conclude that
\begin{equation*}
  \int_{-\infty}^{\infty}
  \|e^{it\HH}g\|_X^2 dt\lesssim\|g\|_{L^{2}}
\end{equation*}
which is exactly \eqref{eq:smoodirac} for $g\in{\rm Ran}(D_A)$, which is dense in $L^2$. So density arguments provide \eqref{eq:smoodirac}.

In order to prove \eqref{eq:smoodirac2}, let us come back to \eqref{eq:LHS}. If we take the strict inequality in \eqref{eq:algebra}, which is equivalent to assume the strict inequality in \eqref{eq:condition}, for instance the minorant being some positiv $\epsilon$, we have that
\begin{align}\label{eq:LHS2}
&2\int_{t=-T}^{t=T}\int_{\R^3}\nabla_AuD^2\phi_R\overline{\nabla_A u}dt
    -\frac12\int_{t=-T}^{t=T}\int_{\R^3}|u|^2\Delta^2\phi_Rdt
    \\
  & +2\Im\int_{t=-T}^{t=T}\int_{\R^3}u\phi'_RB_{\tau}\cdot\overline{\nabla_A u}
    +2\int_{t=-T}^{t=T}\int_{\R^3}|u|^2S\cdot\left[\nabla\phi_RDB\right]dt
    \nonumber
    \\
   & \geq \frac23\|\nabla_Au\|_{X,T}^2+\epsilon\|u\|_{Y,T}^2+\epsilon\int_{t=-T}^{t=T}\||x|^{-\frac12}\nabla_A^\tau u\|_{L^2}^2dt+2\pi\int_{t=-T}^{t=T}|u(t,0)|^2dt.
    \nonumber
\end{align}
By this inequality and \eqref{eq:boundary} we obtain
\begin{multline*} 
  \int_{-T}^{T}
  \left[
  \frac23\|\nabla_Au\|_X^2
  +\epsilon\|u\|_Y^2
  +\epsilon\||x|^{-\frac12}\nabla_A^\tau u\|_{L^2}^2
  +2\pi|u(t,0)|^2
  \right]dt 
\\
  \le
  \|\mathcal D_Au(T)\|_{L^2}^2+\|\mathcal D_Au(-T)\|_{L^2}^2.
\end{multline*}
The right hand side can be estimated using the
obvious inequality
\begin{equation*}
  \|\mathcal D_A f\|_{L^2}^2\leq\|\HH f\|_{L^2}^2,
\end{equation*}
and the conservation of $\|\HH u(t)\|_{L^2}$.
In order to complete
the proof of \eqref{eq:smoodirac2}, it only remains to remark that
the term $\|u\|_{L^\infty_xL^2_t}$ in the inequality
is obtained by the term $\int_{-T}^{T}|u(t,0)|^{2}dt$
by translating in space the multiplier $\phi$.
Letting $T\to \infty$ we conclude the proof.

\section{Proof of Strichartz estimates}

We pass to the proof of Theorem \ref{thm:stri} for
the massless case $\HH=\mathcal{D}_{A}$. We rewrite
$u=e^{it\mathcal D_A}f$ using the Duhamel formula:
\begin{equation}\label{eq:duhamel}
  u(t)=
  e^{it\mathcal D}f+\int_0^te^{i(t-s)\mathcal D}\alpha\cdot Au(s)ds.
\end{equation}
The term $e^{it\mathcal D}f$ is estimated directly via \eqref{eq:strhom}.
For the Duhamel term, we follow the Keel-Tao method (see \cite{kt},
\cite{gv}):
by the Christ-Kiselev Lemma in \cite{ck}, it is sufficient to estimate the
untruncated integral
\begin{equation*}
  \int e^{i(t-s)\mathcal D}\alpha\cdot Au(s)ds=
  e^{it\mathcal D}\int e^{-is\mathcal D}\alpha\cdot Au(s)d
\end{equation*}
since we are only interested in the non-endpoint case.
Again by \eqref{eq:strhom} we obtain
\begin{equation}\label{eq:T}
  \left\|e^{it\mathcal D}\int_0^te^{-is\mathcal D}\alpha\cdot Au(s)ds
  \right\|_{L^p\dot H^{\frac1q-\frac1p-\frac12}_q}\lesssim
  \left\|\int e^{-is\mathcal D}\alpha\cdot Au(s)ds\right\|_{L^2}.
\end{equation}
Now we use the dual form of the
smoothing estimate \eqref{eq:smoodirac}, i.e.
\begin{equation}\label{eq:Tstar}
  \left\|\int e^{-is\mathcal D}\alpha\cdot Au(s)ds\right\|_{L^2}
  \leq
  \sum_{j\in\Z}2^{\frac j2}\||A|\cdot|u|\|_{L^2_tL^2(|x|\sim2^j)}.
\end{equation}
Hence, by H\"older inequality, assumption \eqref{eq:decay} and estimate \eqref{eq:smoodirac} we continue the estimate as follows
\begin{equation}\label{eq:final}
  \sum_{j\in\Z}2^{\frac j2}\||A|\cdot|u|\|_{L^2_tL^2(|x|\sim2^j)}
  \leq
  \sum_{j\in\Z}2^j\sup_{|x|\sim2^j}|A|\cdot \sup_{j\in\Z}2^{-\frac j2}\|u\|_{L^2_tL^2(|x|\sim2^j)}
  \lesssim\|f\|_{L^2},
\end{equation}
and this concludes the proof of \eqref{eq:stridir2}.

We pass now to the proof of \eqref{eq:stridir3} in the massive case.
By mixing free Strichartz estimates with the dual of \eqref{eq:smoodirac},
for the Duhamel term we obtain
\begin{equation}\label{eq:TTstar2}
  \left\|e^{it\mathcal H_0}\int_0^te^{-is\mathcal H_0}\alpha\cdot Au(s)ds
  \right\|_{L^p H^{\frac1q-\frac1p-\frac12}_q}\lesssim
  \sum_{j\in\Z}2^{\frac j2}\||A|\cdot|u|\|_{L^2_tL^2(|x|\sim2^j)},
\end{equation}
for any Schr\"odinger admissible couple $(p,q)$, with $p\geq2$. The endpoint here can be recovered by using exactly
the same technique as in \cite{IK}, Lemma 3.
The rest of the proof is completely analogous to the massless case.

\appendix

\section{Magnetic Hardy Inequality for Dirac}\label{sec:hardy}

We now prove Proposition \ref{thm:hardy}.
Denote by $(\cdot,\cdot)$ the inner product in $ L^2(\R^3,\C^4)$, $\|\cdot\|$ the associated norm, and observe
that, due to the formula \eqref{eq:spin}, we have the relation
\begin{align*}
  \|\mathcal D_A f\|^2 & =(\alpha\cdot\nabla_Af,\alpha\cdot\nabla_Af)
  =-((\alpha\cdot\nabla_A)(\alpha\cdot\nabla_A)f,f)
  \\
  &
  =-(\nabla_A^2f,f)
  -2i(S\cdot(\nabla_A\wedge\nabla_A)f,f),
\end{align*}
where $S=\frac i4\alpha\wedge\alpha$ is the spin operator.
Writing for brevity $\partial_j^A=\partial_j-iA^j$,
we can compute explicitly
\begin{equation*}
   \nabla_A\wedge\nabla_A  =\left([\partial_2^A,\partial_3^A],
  [\partial_3^A,\partial_1^A],
  [\partial_1^A,\partial_2^A]\right)=iB,
\end{equation*}
where $B=\text{curl}A$. Hence by the previous relation we obtain
\begin{equation*}
  0\leq\|\mathcal D_A f\|^2=\|\nabla_Af\|^2
  +2(S\cdot Bf,f).
\end{equation*}
Notice that $S$ is a triple of matrices of norm $\le1/2$, hence
we can write
\begin{equation*}
  |2(S\cdot Bf,f)|\le C_{0}\||x|^{-1}f\|^{2}
\end{equation*}
and this implies
\begin{equation}\label{eq:diraccurl}
  \|\mathcal D_A f\|^2\ge\|\nabla_Af\|^2-
  C_{0} \|\frac{f}{|x|}\|^2-\|B_{2}\|_{L^{\infty}}\|f\|^{2}.
\end{equation}
Now we recall the magnetic Hardy inequality
\begin{equation}\label{eq:ardimagn}
  \frac14\int\frac{|f|^2}{|x|^2}dx
  \leq\int|\nabla_Af|^2dx,
\end{equation}
which is proved in \cite{FV}.
To complete the proof of \eqref{eq:hardy}, it is sufficient to write
\begin{equation*}
  \|\HH f\|_{L^{2}}^{2} =(\HH^{2}f,f)=
  m^{2}\|f\|^{2}+\|\mathcal{D}_{A}f\|^{2}
\end{equation*}
and use the preceding estimates.

The $\epsilon$ inequality is obtained using
\begin{equation*}
  (1-\epsilon)\frac14\int\frac{|f|^2}{|x|^2}dx+\epsilon \int|\nabla_Af|^2dx
  \leq\int|\nabla_Af|^2dx.
\end{equation*}

\end{document}